\newtheorem{thm}{Theorem}[section]
\newtheorem*{thm*}{Theorem}
\newtheorem{cor}[thm]{Corollary}
\newtheorem{prop}[thm]{Proposition}
\theoremstyle{definition}
\newtheorem{dfn}[thm]{Definition}
\newtheorem*{dfn*}{Definition}
\newtheorem{chunk}[thm]{}
\newtheorem*{conj*}{Conjecture}
\newtheorem{ex}[thm]{Example}
\theoremstyle{remark}
\newtheorem*{claim*}{Claim}
\numberwithin{equation}{thm}
\def\Tr{\mathsf{Tr}}
\def\Hom{\mathsf{Hom}}
\def\im{\operatorname{\mathsf{im}}}
\def\ker{\operatorname{\mathsf{ker}}}
\newcommand{\ZZ}{\mathbb{Z}}
\def\depth{\mathsf{depth}}
\def\Gdim{\textnormal{G-dim}}
\def\Hdim{\textnormal{H-dim}}
\def\H{\textnormal{H}}
\def\pd{\mathsf{pd}}
\def\id{\mathsf{id}}
\def\Ext{\mathsf{Ext}}
\def\Tor{\mathsf{Tor}}
\def\fm{\mathfrak{m}}
\def\fp{\mathfrak{p}}
\def\sH{\mathsf{H}}
\title{Powers of the maximal ideal and vanishing of (co)homology}
\author[Celikbas]{Olgur Celikbas}
\address[Olgur Celikbas]{
Department of Mathematics, West Virginia University, Morgantown, WV 26506-6310, USA}
\email{olgur.celikbas@math.wvu.edu}
\author[Takahashi]{Ryo Takahashi} 
\address[Ryo Takahashi]{Graduate School of Mathematics, Nagoya University, Furocho, Chikusaku, Nagoya, Aichi 464-8602, Japan/Department of Mathematics, University of Kansas, Lawrence, KS 66045-7523, USA}
\email{takahashi@math.nagoya-u.ac.jp}
\urladdr{https://www.math.nagoya-u.ac.jp/~takahashi/}
\subjclass[2010]{Primary 13D07; Secondary 13H10, 13D05, 13C12}
\keywords{Tor, Ext, local ring, depth, homological dimension, torsion}
\thanks{Takahashi was partly supported by JSPS Grants-in-Aid for Scientific Research 16K05098 and 16KK0099}
\begin{document}

\begin{abstract} We prove that each positive power of the maximal ideal of a commutative Noetherian local ring is Tor-rigid, and strongly-rigid. This gives new characterizations of regularity and, in particular, shows that such ideals satisfy the torsion condition of a long-standing conjecture of Huneke and Wiegand. 
\end{abstract}

\maketitle

\section{Introduction}

Throughout $R$ denotes a commutative Noetherian local ring with unique maximal ideal $\fm$ and residue field $k$, and all $R$-modules are assumed to be finitely generated.

In this paper we are motivated by the following result of Levin and Vasconcelos:

\begin{thm}[{Levin and Vasconcelos \cite{LV}}] \label{LV} Let $M$ be an $R$-module. Assume $\fm^tM\neq 0$ for some $t\geq 1$. Then $R$ is regular if and only if $\pd_R(\fm^tM)<\infty$ if and only if $\id_R(\fm^tM)<\infty$. 
\end{thm}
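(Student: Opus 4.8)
\textit{Proof strategy.} The plan is to dispose of the trivial implications, reduce the substantive ones to the exponent $t=1$, settle the case $\depth R=0$ by a socle computation, and reduce the general case to that one. A regular local ring has finite global dimension, so every finitely generated module---in particular $\mathfrak{m}^tM$---has finite projective and finite injective dimension, which gives the implications starting from regularity. For the converse, note that $\mathfrak{m}^tM=\mathfrak{m}(\mathfrak{m}^{t-1}M)$ and $\mathfrak{m}(\mathfrak{m}^{t-1}M)=\mathfrak{m}^tM\neq0$, so the assertion for $(M,t)$ coincides with the assertion for the finitely generated module $\mathfrak{m}^{t-1}M$ with $t=1$; hence it is enough to prove: \emph{if $N$ is finitely generated with $\mathfrak{m}N\neq0$ and $\pd_R(\mathfrak{m}N)<\infty$ (resp.\ $\id_R(\mathfrak{m}N)<\infty$), then $R$ is regular.}

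I would first do the case $\depth R=0$, where $\operatorname{soc}R=(0:_R\mathfrak{m})\neq0$. If $\pd_R(\mathfrak{m}N)<\infty$, the Auslander--Buchsbaum formula forces $\pd_R(\mathfrak{m}N)=\depth R-\depth(\mathfrak{m}N)\le0$, so $\mathfrak{m}N$ is free; being nonzero, it has $R$ as a direct summand, so $\operatorname{soc}R\cdot(\mathfrak{m}N)\neq0$. On the other hand, writing an element of $\mathfrak{m}N$ as $\sum b_iw_i$ with $b_i\in\mathfrak{m}$, $w_i\in N$, and multiplying by any $a\in\operatorname{soc}R$, we get $\sum(ab_i)w_i=0$ because $\operatorname{soc}R\cdot\mathfrak{m}=0$; thus $\operatorname{soc}R\cdot(\mathfrak{m}N)=0$, a contradiction. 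Hence $\mathfrak{m}N$ cannot be nonzero, forcing $\mathfrak{m}=0$ and $R$ a field. The injective case is parallel: the Bass formula gives $\id_R(\mathfrak{m}N)=\depth R=0$, so the finitely generated module $\mathfrak{m}N$ is injective; a nonzero finitely generated injective over a Noetherian local ring makes $R$ Artinian Gorenstein with that module free, and the same socle computation applies.

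Finally, I would reduce the case $\depth R\ge1$ to $\depth R=0$, and this is the step I expect to be the main obstacle. The natural device is change of rings: choose $x\in\mathfrak{m}$ a nonzerodivisor on $R$ and on $\mathfrak{m}N$ (possible once $\depth(\mathfrak{m}N)\ge1$; the complementary case $\pd_R(\mathfrak{m}N)=\depth R$, where $\depth(\mathfrak{m}N)=0$, needs separate handling), and pass to $\overline R=R/xR$, which is local, non-regular, of strictly smaller depth, with $\pd_{\overline R}(\mathfrak{m}N/x\mathfrak{m}N)=\pd_R(\mathfrak{m}N)<\infty$. The snag is that $\mathfrak{m}N/x\mathfrak{m}N$ need not be of the form ``$\mathfrak{m}$ times an $\overline R$-module'': one computes $\mathfrak{m}(N/xN)\cong\mathfrak{m}N/(\mathfrak{m}N\cap xN)$, which differs from $\mathfrak{m}N/x\mathfrak{m}N$ by a copy of $\Tor_1^R(R/xR,\,N/\mathfrak{m}N)\cong k^{\mu(N)}$, a module of infinite projective dimension over $\overline R$. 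Getting past this---either by strengthening the inductive statement to a form stable under this change of rings, or by arguing directly that $\mathfrak{m}N\neq0$ and $\pd_R(\mathfrak{m}N)<\infty$ force $\pd_R N<\infty$ (after which $0\to\mathfrak{m}N\to N\to k^{\mu(N)}\to0$ yields $\pd_R k<\infty$, hence regularity)---is the crux; the injective-dimension statement would follow dually, via Matlis duality over the completion.
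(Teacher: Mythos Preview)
Your proposal leaves the case $\depth R\ge1$ genuinely open. You correctly diagnose that the change-of-rings reduction breaks---$\mathfrak{m}N/x\mathfrak{m}N$ is not of the form $\overline{\mathfrak{m}}\cdot(-)$---and you name the right alternative target (show $\pd_R(\mathfrak{m}N)<\infty\Rightarrow\pd_RN<\infty$, then conclude via $0\to\mathfrak{m}N\to N\to k^{\mu(N)}\to0$), but you do not carry it out. Your socle argument for $\depth R=0$ is fine, but it is not the heart of the matter; the paper does not even treat that case separately.

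The paper avoids induction on depth entirely. The missing ingredient, implicit already in Levin--Vasconcelos and isolated in the paper as 2.2, is a Nakayama-type observation: if $C=(C_i,\partial_i)$ is a \emph{minimal} complex (i.e.\ $\im\partial_{i+1}\subseteq\mathfrak{m}C_i$) and $\sH_n(\mathfrak{m}C)=0$, then $\im\partial_{n+1}\subseteq\ker\partial_n\cap\mathfrak{m}C_n=\mathfrak{m}\cdot\im\partial_{n+1}$, so $\partial_{n+1}=0$. Apply this with $C=N\otimes_RF$, where $F$ is a minimal free resolution of a test module $X$; then $\mathfrak{m}C=\mathfrak{m}N\otimes_RF$ and $\sH_i(\mathfrak{m}C)=\Tor_i^R(\mathfrak{m}N,X)$. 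Two consecutive vanishings of $\Tor^R_\bullet(\mathfrak{m}N,X)$ kill two consecutive differentials of $C$; combined with the long exact sequence of $0\to\mathfrak{m}N\to N\to N/\mathfrak{m}N\to0$ and the hypothesis $\mathfrak{m}N\neq0$, this forces $\pd_RX<\infty$ (this is exactly the original Levin--Vasconcelos lemma, recalled in the paper just before Proposition~1.6). Taking $X=k$ gives Theorem~1.1 in full, with no depth hypothesis and no reduction mod a regular element. The paper's own argument is a refinement of this: under the extra assumption $\depth_RM\ge1$, the same observation yields Theorem~1.2, which needs only a \emph{single} vanishing $\Tor_{s+1}^R(\mathfrak{m}^tM,k)=0$ to conclude $\Tor_{s+1}^R(\mathfrak{m}^{t-1}M,k)=0$; then the exact sequence $0\to\mathfrak{m}^tM\to\mathfrak{m}^{t-1}M\to k^u\to0$ with $u\ge1$ finishes.
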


Theorem \ref{LV} was examined previously in the literature. For example, Asadollahi and Puthenpurakal obtained beautiful characterizations of local rings in terms of various homological dimensions: if  $M$ is an $R$-module of positive depth with $\Hdim_R(\fm^n M)<\infty$ for some $n\gg 0$, then $R$ satisfies the property $\H$, where $\Hdim$ denotes a homological dimension such as projective dimension; see \cite[Theorem 1]{TJ} for details. The main purpose of this short note is to prove an analogous result for nonzero modules of the form $\fm^tM$. However, our main result, stated as Theorem \ref{mainthmintro}, concerns the vanishing of $\Ext$ and $\Tor$ rather than homological dimensions. 

\begin{thm} \label{mainthmintro} Let $M$ be an $R$-module with $\depth_R(M)\geq 1$, and let $t\geq 0$ be an integer. If $\Tor_n^R(\fm^t M, N) = 0$ (respectively, $\Ext_R^n(N, \fm^t M) = 0$) for some $R$-module $N$ and some $n\geq 1$, then $\Tor_n^R(M, N) = 0$ (respectively, $\Ext_R^n(N, M) = 0$). 
\end{thm}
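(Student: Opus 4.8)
The plan is to reduce to the case $t=1$ and then extract everything from a minimal free resolution of $N$. We may assume $M\neq0$; then $\depth_RM\geq1$ forces $\fm^sM\neq0$ for all $s\geq0$, since otherwise $M$ would be a nonzero module of finite length and so of depth $0$. Hence for $t\geq1$ the module $\fm^{t-1}M$ is a nonzero submodule of $M$, so $(0:_{\fm^{t-1}M}\fm)\subseteq(0:_M\fm)=\Hom_R(k,M)=0$ and $\depth_R(\fm^{t-1}M)\geq1$. Writing $\fm^tM=\fm\cdot(\fm^{t-1}M)$ and iterating, it suffices to prove both assertions when $t=1$.

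Fix a minimal free resolution $G_\bullet\to N$ with $G_i=R^{b_i}$; in suitable bases its differentials are matrices with all entries in $\fm$. For any module $L$ one has $\Tor_n^R(L,N)=H_n(G_\bullet\otimes_RL)$, the complex $G_\bullet\otimes_RL$ being $\cdots\to L^{b_{n+1}}\xrightarrow{e_{n+1}}L^{b_n}\xrightarrow{e_n}L^{b_{n-1}}\to\cdots$ with $e_i$ left multiplication by the (basis-independent) matrix of the $i$th differential; for $L=\fm M$ this is the subcomplex of the $L=M$ complex supported on the submodules $\fm\cdot M^{b_i}$. Since those matrices have entries in $\fm$ we get $\im(e_{n+1})\subseteq\fm\cdot M^{b_n}$, while $\im(e_{n+1})\subseteq\ker(e_n)$ because $G_\bullet$ is a complex, and $e_{n+1}(\fm\cdot M^{b_{n+1}})=\fm\cdot\im(e_{n+1})$ by $R$-linearity. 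Thus
\[
\Tor_n^R(\fm M,N)\cong\frac{\ker(e_n)\cap(\fm\cdot M^{b_n})}{\fm\cdot\im(e_{n+1})},
\]
and $\im(e_{n+1})$ lies in the numerator. So $\Tor_n^R(\fm M,N)=0$ gives $\im(e_{n+1})\subseteq\fm\cdot\im(e_{n+1})$, whence $\im(e_{n+1})=0$ by Nakayama, and therefore $\ker(e_n)\cap\fm\cdot M^{b_n}=0$.

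Now $\Tor_n^R(M,N)=H_n(G_\bullet\otimes_RM)=\ker(e_n)/\im(e_{n+1})=\ker(e_n)$. If $z\in\ker(e_n)\subseteq M^{b_n}$, then $\fm z$ lies in both $\ker(e_n)$ (a submodule) and $\fm\cdot M^{b_n}$, so $\fm z\subseteq\ker(e_n)\cap\fm\cdot M^{b_n}=0$; that is, $z\in(0:_{M^{b_n}}\fm)=(0:_M\fm)^{b_n}=0$, using $\depth_RM\geq1$. Hence $\Tor_n^R(M,N)=0$.

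For $\Ext$ one runs the identical argument on the cochain complex $\Hom_R(G_\bullet,-)$, whose coboundaries are again matrices with entries in $\fm$: from $\Ext_R^n(N,\fm M)=0$ one first gets that the coboundary $\Hom_R(G_{n-1},M)\to\Hom_R(G_n,M)$ is zero (Nakayama again) and then that the kernel of $\Hom_R(G_n,M)\to\Hom_R(G_{n+1},M)$ meets $\fm\cdot M^{b_n}$ only in $0$; since that kernel equals $\Ext_R^n(N,M)$, the depth hypothesis forces it to vanish. The step that is not pure bookkeeping is the Nakayama argument producing $\im(e_{n+1})=0$; that, leveraging the modest hypothesis $\Hom_R(k,M)=0$, is where I expect the substance of the theorem to lie.
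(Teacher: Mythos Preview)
Your proof is correct and takes essentially the same approach as the paper: reduce to $t=1$, tensor a minimal free resolution of $N$ with $M$, use Nakayama on the subcomplex $\fm M\otimes_R G_\bullet$ to kill the boundary $\im(e_{n+1})$, and then invoke $\depth_RM\geq1$ to finish. The only cosmetic difference is the endgame: the paper first observes (via $0\to\fm M\to M\to M/\fm M\to0$) that $\Tor_n^R(M,N)$ has finite length and then contrasts this with the fact that $\ker(e_n)\hookrightarrow M^{b_n}$ forces positive depth, whereas you extract the sharper fact $\ker(e_n)\cap\fm M^{b_n}=0$ directly from the vanishing hypothesis and conclude that $\ker(e_n)$ is annihilated by $\fm$.
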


Theorem \ref{mainthmintro} does not hold for modules of zero depth in general; see Examples \ref{ornek1} and \ref{ornek2}. In section 2 we give a proof of Theorem \ref{mainthmintro} and discuss its consequences. We should mention that one such consequence of Theorem \ref{mainthmintro} is Theorem \ref{LV} in the positive depth case; see the paragraph after Example \ref{ornek2}. Moreover,  Theorem \ref{mainthmintro} implies that each positive power of the maximal ideal is Tor-rigid and strongly-rigid; see Definition \ref{TTdfn}. More precisely, we have:

\begin{cor} \label{sonuc1} Assume $\depth(R)\geq 1$, and let $t\geq 1$. If $\Tor_n^R(\fm^t, N)=0$ for some $n\geq 0$ and some $R$-module $N$, then $\pd_R(N)\leq n$, and hence $\Tor_i^R(\fm^t, N)=0$ for all $i\geq n$.
\end{cor}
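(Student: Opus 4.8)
The plan is to reduce the statement, in two steps, to the classical fact that vanishing of $\Tor_*^R(k,-)$ detects finite projective dimension. The crucial choice is $M=\fm$ in Theorem \ref{mainthmintro}: this is legitimate because $\depth(R)\ge 1$ forces $\depth_R(\fm)\ge 1$, as one sees by applying the depth inequality to $0\to\fm\to R\to k\to 0$, which gives $\depth_R(\fm)\ge\min\{\depth(R),\,\depth_R(k)+1\}=\min\{\depth(R),1\}$.

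So suppose $\Tor_n^R(\fm^t,N)=0$ with $n\ge 0$. First I would settle the degenerate case $n=0$: then $\fm^t\otimes_R N=0$, and since $\depth(R)\ge 1$ forces $\fm^t\ne 0$ (an $R$-regular element $x\in\fm$ has $0\ne x^t\in\fm^t$), faithfulness of the tensor product over the local ring $R$ forces $N=0$, so $\pd_R(N)\le 0=n$ and the remaining assertion is vacuous. Assume now $n\ge 1$. Applying Theorem \ref{mainthmintro} with $M=\fm$ and the integer $t-1\ge 0$, and noting $\fm^{t-1}\cdot\fm=\fm^t$, I get $\Tor_n^R(\fm,N)=0$ (for $t=1$ this step is trivial). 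Tensoring $0\to\fm\to R\to k\to 0$ with $N$ and using $\Tor_{n+1}^R(R,N)=\Tor_n^R(R,N)=0$ (valid since $n\ge 1$), the long exact sequence yields $\Tor_{n+1}^R(k,N)\cong\Tor_n^R(\fm,N)=0$. Since the $k$-vector space dimension of $\Tor_{n+1}^R(k,N)$ is the $(n+1)$-st Betti number of $N$, read off a minimal free resolution, its vanishing forces $\pd_R(N)\le n$. Finally $\pd_R(N)\le n$ gives $\Tor_i^R(\fm^t,N)=0$ for all $i\ge n+1$, which together with the hypothesis at $i=n$ gives $\Tor_i^R(\fm^t,N)=0$ for all $i\ge n$.

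I do not foresee a real difficulty: Theorem \ref{mainthmintro} is the only substantial ingredient and is already available — it is precisely what allows $\fm^t$ to be replaced by $\fm$ — while the depth inequality for a short exact sequence, the long exact sequence of $\Tor$, and the Betti-number description of projective dimension are all standard. The points needing a little attention are: checking $\depth_R(\fm)\ge 1$ so that Theorem \ref{mainthmintro} applies with $M=\fm$; treating the case $t=1$ (where the reduction is empty) and the case $n=0$ (where one uses faithfulness of the tensor product) separately; and remembering that the displayed isomorphism in the long exact sequence needs $n\ge 1$.
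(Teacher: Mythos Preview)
Your proof is correct and follows exactly the route the paper indicates: the paper simply says the corollary ``follows immediately by letting $M=\fm$ in Theorem~\ref{mainthmintro}'', and you have supplied the details (checking $\depth_R(\fm)\ge 1$, handling $n=0$, and passing from $\Tor_n^R(\fm,N)=0$ to $\Tor_{n+1}^R(k,N)=0$ via the short exact sequence $0\to\fm\to R\to k\to 0$). There is nothing to add.
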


Although the behavior of powers of the maximal ideal obtained in Corollary \ref{sonuc1} may seem expectable, to the best of our knowledge, the conclusion of the corollary is new. Note that Corollary \ref{sonuc1} follows immediately by letting $M=\fm$ in Theorem \ref{mainthmintro}. 

Corollary \ref{sonuc1} yields a new characterization of regularity for local rings of positive depth: we record the result as Corollary \ref{sonuc2} and prove it in the paragraph preceding Corollary \ref{cor1}.

\begin{cor} \label{sonuc2} Assume $\depth(R)\geq 1$. If $M$ is an $R$-module such that $\fm^sM\neq 0$ for some $s\geq 1$, then $R$ is regular if and only if $\Ext^n_R(\fm^sM,\fm^t )=0$ for some $n, t\geq 1$. In particular, $R$ is regular if and only if $\Ext^n_R(\fm^s,\fm^t)=0$ for some $n, s, t\geq 1$. 
\end{cor}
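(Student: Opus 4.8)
The plan is to derive the statement as a straightforward consequence of Corollary \ref{sonuc1} (hence ultimately of Theorem \ref{mainthmintro}), together with the classical fact that a local ring is regular precisely when it has finite global dimension, equivalently when some—hence every—nonzero module has finite projective dimension. First I would dispose of the ``only if'' direction: if $R$ is regular, then $\pd_R(\fm^t)<\infty$, so $\Ext^n_R(-,\fm^t)$ vanishes for all $n$ exceeding this projective dimension and for every argument module, in particular for $N=\fm^sM$ and $N=\fm^s$; choosing any such large $n$ (together with $s,t=1$ in the ``in particular'' clause) gives the desired vanishing. This direction uses nothing beyond standard homological algebra over a regular local ring.

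For the ``if'' direction, suppose $\Ext^n_R(\fm^sM,\fm^t)=0$ for some $n,t\geq 1$, with $\fm^sM\neq 0$ and $\depth(R)\geq 1$. I would apply Corollary \ref{sonuc1} with the roles of its modules assigned as $N:=\fm^sM$ and the fixed power $\fm^t$; however Corollary \ref{sonuc1} is phrased for $\Tor$, so the cleaner route is to invoke Theorem \ref{mainthmintro} directly in its $\Ext$ form. Taking $M:=\fm$ (which has $\depth_R(\fm)\geq 1$ since $\depth(R)\geq 1$) and the integer ``$t$'' of Theorem \ref{mainthmintro} equal to $t-1\geq 0$, the hypothesis $\Ext^n_R(\fm^sM,\,\fm^{t-1}\cdot\fm)=\Ext^n_R(\fm^sM,\fm^t)=0$ yields $\Ext^n_R(\fm^sM,\fm)=0$. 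Now I would iterate: another application of Theorem \ref{mainthmintro} (again with $M=\fm$, this time with its ``$t$'' equal to $0$, so that $\fm^0\cdot\fm=\fm$) gives $\Ext^n_R(\fm^sM,R)=0$. Alternatively, and more efficiently, one observes that the argument just given shows $\Ext^n_R(\fm^sM,\fm^t)=0$ forces $\Ext^n_R(\fm^sM,R)=0$ after descending the exponent on the target from $t$ down to $0$ by repeated use of the short exact sequence $0\to\fm^{j+1}\to\fm^j\to\fm^j/\fm^{j+1}\to 0$ is not needed—Theorem \ref{mainthmintro}'s conclusion already replaces $\fm^t M$ by $M$ in one stroke.

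At this point we know $\Ext^n_R(\fm^sM,R)=0$ for some $n\geq 1$ while $\fm^sM\neq 0$. The remaining step is to conclude that $\fm^sM$, and hence $R$, must be such that $\pd_R(\fm^sM)<\infty$: here I would cite the theorem of Auslander–Bridger / Jothilingam (the ``Ext-rigidity'' of the ground ring, i.e.\ that $\Ext^n_R(L,R)=0$ for a single $n\geq 1$ and $L\neq 0$ implies $\pd_R(L)<n$), which is exactly the content captured by Corollary \ref{sonuc1} via the Tor–Ext duality used to prove Theorem \ref{mainthmintro}. Thus $\pd_R(\fm^sM)<\infty$, and since $\fm^sM\neq 0$, the ring $R$ has a nonzero module of finite projective dimension whose syzygies eventually involve no relations, forcing $\gldim R<\infty$, i.e.\ $R$ is regular. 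The ``in particular'' statement is the special case $M=\fm$, $s=1$ (or any $M$ with $\fm^sM\neq0$), so it requires no separate argument. The only genuinely substantive input is Theorem \ref{mainthmintro}; I expect the main obstacle to be purely bookkeeping—correctly matching the exponent conventions ($\fm^t$ versus $\fm^{t-1}\cdot\fm$) so that a single invocation of the already-proved theorem collapses the target power of $\fm$ down to $R$.
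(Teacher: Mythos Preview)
Your argument has a genuine gap at the step where you pass from $\Ext^n_R(\fm^sM,R)=0$ to $\pd_R(\fm^sM)<\infty$. The claimed ``Ext-rigidity of the ground ring'' --- that a single vanishing $\Ext^n_R(L,R)=0$ with $L\neq 0$ forces $\pd_R(L)<n$ --- is simply false. For instance, over $R=k[\![x,y]\!]/(xy)$ (which has depth one) the module $L=R/(x)$ is maximal Cohen--Macaulay, so $\Ext^n_R(L,R)=0$ for every $n\ge 1$, yet $\pd_R(L)=\infty$. No result of Auslander--Bridger or Jothilingam gives what you assert, and Corollary~\ref{sonuc1} is a statement about $\Tor$ against $\fm^t$, not about $\Ext$ against $R$; there is no ``Tor--Ext duality'' that converts it. A secondary issue: even granting $\pd_R(\fm^sM)<\infty$, the existence of a nonzero module of finite projective dimension does not by itself force $R$ to be regular (take $R$ itself); you need to invoke Theorem~\ref{LV}, which uses the specific form $\fm^sM$.

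The paper proceeds differently and avoids reducing the target to $R$. It keeps $\fm^t$ in the second argument and instead exploits Corollary~\ref{sonuc1} to know that $\fm^t$ is both Tor-rigid and strongly-rigid; since also $\depth_R(\fm^t)=1$, the cited result \cite[1.1]{RM} then converts the single vanishing $\Ext^n_R(\fm^sM,\fm^t)=0$ into $\pd_R(\fm^sM)<\infty$. Finally Theorem~\ref{LV} (Levin--Vasconcelos) yields regularity. The substantive external input you are missing is precisely this bridge from Ext-vanishing against a rigid module of positive depth to finiteness of projective dimension.
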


As another consequence of Corollary \ref{sonuc1}, we conclude by \cite[2.15]{CGTT} that each positive power of the maximal ideal satisfies the torsion condition proposed in a long-standing conjecture of Huneke and Wiegand; see \cite[pages 473-474]{HW1} for details.

\begin{cor} \label{sonuc3} Assume $R$ is one-dimensional, non-regular, and reduced. Then $\fm^t \otimes_R (\fm^t)^{\ast}$ has torsion for each $t\geq 1$, where $(\fm^t)^{\ast}=\Hom_R(\fm^t,R)$.
\end{cor}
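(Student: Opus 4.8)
The plan is to reduce the statement to Corollary \ref{sonuc1} by invoking the criterion of \cite[2.15]{CGTT}, which (under the hypothesis that $R$ is one-dimensional and reduced) says that a nonzero $R$-module $X$ satisfying Serre's condition is "torsion-free-rigid" in a suitable sense precisely when a certain Tor-vanishing forces freeness, so that $X\otimes_R X^{\ast}$ is torsion-free only if $X$ is free. Concretely, I would fix $t\geq 1$, set $X=\fm^t$, and argue that if $\fm^t\otimes_R(\fm^t)^{\ast}$ were torsion-free, then by \cite[2.15]{CGTT} one obtains $\Tor_1^R(\fm^t,N)=0$ for the relevant module $N$ (namely $N=(\fm^t)^{\ast}$ or its dual), whence Corollary \ref{sonuc1} applies.

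\textbf{Step 1.} Observe that since $R$ is reduced and one-dimensional, $\depth(R)\geq 1$, so the hypotheses of Corollary \ref{sonuc1} are in force. Moreover $\fm^t$ is a nonzero ideal (as $R$ is not a field, being non-regular), so $\fm^t$ is faithful and has positive rank; in particular it is not free, because $R$ is not regular and any free ideal would be principal generated by a nonzerodivisor, forcing $\fm^t$ to be isomorphic to $R$, which is impossible for $t\geq 1$ in a non-regular ring (as $\fm^t\cong R$ would give $\fm^t$ projective, hence $\pd_R(\fm^t)=0$ and then $\pd_R k<\infty$, contradicting non-regularity).

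\textbf{Step 2.} Apply \cite[2.15]{CGTT} to $X=\fm^t$: that result characterizes when $X\otimes_R X^{\ast}$ has torsion, and shows that torsion-freeness of $X\otimes_R X^{\ast}$ implies a vanishing $\Tor_n^R(X,X^{\ast})=0$ (or equivalently $\Tor_n^R(\fm^t,N)=0$ with $N$ an appropriate syzygy or dual) for some $n\geq 1$. By Corollary \ref{sonuc1}, this vanishing forces $\pd_R(N)<\infty$, and chasing back through the duality one concludes $\pd_R(\fm^t)<\infty$; but then $R$ is regular by Theorem \ref{LV} (with $M=R$, so that $\fm^t M=\fm^t\neq 0$), contradicting the hypothesis. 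Hence $\fm^t\otimes_R(\fm^t)^{\ast}$ must have torsion.

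\textbf{Main obstacle.} The delicate point is matching the precise hypotheses and output of \cite[2.15]{CGTT} to the situation at hand: I need to confirm that $\fm^t$ satisfies whatever Serre-type or rank conditions that result requires (which should be automatic here, since over a one-dimensional reduced ring every nonzero ideal is a maximal Cohen--Macaulay module of constant rank satisfying $(\mathrm{S}_1)$), and that the Tor-vanishing it delivers is of the shape $\Tor_n^R(\fm^t,-)=0$ for some $n\geq 1$ rather than merely $n=1$ — but Corollary \ref{sonuc1} handles all $n\geq 0$, so this causes no trouble. Everything else is a direct citation.
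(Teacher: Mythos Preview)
Your proposal is correct and follows exactly the paper's approach: the paper's entire argument for this corollary is the single sentence preceding its statement, namely that Corollary~\ref{sonuc1} establishes the (Tor- and strong-) rigidity of $\fm^t$, and then \cite[2.15]{CGTT} delivers the torsion conclusion. Your only imprecision is in Step~2, where you speculate that \cite[2.15]{CGTT} outputs a Tor-vanishing to be fed back into Corollary~\ref{sonuc1}; in fact the flow is the reverse --- \cite[2.15]{CGTT} takes rigidity as a \emph{hypothesis} and concludes the Huneke--Wiegand torsion property directly --- but since you flagged exactly this matching as your main obstacle, there is no substantive gap.
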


Levin and Vascencelos \cite[Lemma, page 316]{LV} proved, if $M$ and $N$ are $R$-modules such that $\fm M\neq 0$ and $\Tor_n^R(\fm M, N)=\Tor_{n+1}^R(\fm M, N)=0$ for some $n\geq 0$, then $\pd_R(N)\leq n$ and $\Tor_i^R(\fm M, N)=0$ for all $i\geq n$; see also \cite[2.9]{ARideal}. While proving Theorem \ref{mainthmintro}, we have discovered that we can extend the result of Levin and Vascencelos  by considering nonzero modules of the form $\fm M\otimes_R N$: at the end of Section 2, we will observe that each such module is isomorphic to $\fm C$ for some $R$-module $C$, and we will prove the following:

\begin{prop} \label{prop1} Assume $R$ is not Artinian and let $n\geq 1$ be an integer. Then the following conditions are equivalent:
\begin{enumerate}[\rm(i)]
\item $R$ is Gorenstein.
\item $\Ext^i_R(\fm^{\otimes n}, R)=0$ for all $i\gg 0$.
\item $\Ext^i_R(\fm^{n}, R)=0$ for all $i\gg 0$.
\end{enumerate}
\end{prop}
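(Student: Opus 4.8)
The plan is to prove the cycle (iii) $\Rightarrow$ (i) $\Rightarrow$ (ii) $\Rightarrow$ (iii), using the machinery already assembled. The implication (i) $\Rightarrow$ (ii) should be the easy one: if $R$ is Gorenstein then $R$ has finite injective dimension, so $\Ext^i_R(X,R)=0$ for all $i\gg 0$ and every module $X$; in particular this applies to $X=\fm^{\otimes n}$, the $n$-fold tensor power of $\fm$ over $R$. The implication (iii) $\Rightarrow$ (i) is essentially classical in flavor: vanishing of $\Ext^i_R(\fm^n,R)$ for $i\gg 0$ forces, via the short exact sequence $0\to\fm^n\to R\to R/\fm^n\to 0$, the vanishing $\Ext^i_R(R/\fm^n,R)=0$ for $i\gg 0$; since $R/\fm^n$ has finite length it generates the same thick subcategory (up to the relevant vanishing) as $k=R/\fm$, so $\Ext^i_R(k,R)=0$ for $i\gg 0$, and Bass's characterization then yields that $R$ is Gorenstein. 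I would want to be a little careful here that "reducing $R/\fm^n$ to $k$" is legitimate for this $\Ext$-vanishing-in-high-degrees statement — filtering $R/\fm^n$ by powers of $\fm$ gives a finite filtration with subquotients that are $k$-vector spaces, so a long-exact-sequence induction on the length does the job.

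The real content is the equivalence of (ii) and (iii), i.e. replacing the external tensor power $\fm^{\otimes n}:=\fm\otimes_R\cdots\otimes_R\fm$ by the honest ideal power $\fm^n$. Here I would lean on the remark promised in the excerpt just before the proposition: each nonzero module of the form $\fm M\otimes_R N$ is isomorphic to $\fm C$ for a suitable module $C$. Iterating, $\fm^{\otimes n}\cong \fm D$ for some module $D$ (as long as this tensor power is nonzero, which it is since $R$ is not Artinian, so all powers $\fm^j$ are nonzero and the surjection $\fm^{\otimes n}\twoheadrightarrow \fm^n\neq 0$ shows $\fm^{\otimes n}\neq 0$). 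Now $\fm D$ has a natural surjection onto $\fm^n$ — more precisely one first maps $\fm^{\otimes n}$ onto $\fm^n$ by multiplication — and the point is to control the kernel. The cleanest route: write $\fm^{\otimes n}\cong \fm D$ with $D=\fm^{\otimes(n-1)}$, so there is an exact sequence $0\to K\to \fm\otimes_R D\to \fm D\to 0$, but $\fm D$ here is exactly the image of multiplication $\fm\otimes_R \fm^{\otimes(n-1)}\to R$, which after the identification is the submodule generated by all products, i.e. a module surjecting onto $\fm^n$; inductively one reduces to understanding $\Tor_1^R(R/\fm,\fm^{\otimes(n-1)})$ and its iterates, all of which are finite-length (hence built from $k$). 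So $\fm^{\otimes n}$ and $\fm^n$ differ, in the derived category, only by complexes with finite-length homology, and such complexes are $\Ext$-acyclic against $R$ in high degrees precisely when $R$ is Gorenstein. This lets me bootstrap: assuming (ii), either all the finite-length "error terms" are already $\Ext$-acyclic against $R$ in high degrees — giving (iii) directly — or one of them is not, in which case $\Ext^i_R(k,R)\neq 0$ for infinitely many $i$, so $R$ is not Gorenstein, and then I must separately check (ii) fails, contradiction; symmetrically for (iii) $\Rightarrow$ (ii).

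The step I expect to be the main obstacle is making the passage between $\fm^{\otimes n}$ and $\fm^n$ genuinely precise — i.e. identifying exactly the finite-length "defect" modules and verifying that the $\Ext^i_R(-,R)$-vanishing for large $i$ transfers across the relevant short exact sequences in \emph{both} directions without circularity. The risk is that the defect modules are not literally $k$-vector spaces but merely finite-length, and one needs the elementary fact that a finite-length module $L$ satisfies $\Ext^i_R(L,R)=0$ for $i\gg 0$ if and only if $\Ext^i_R(k,R)=0$ for $i\gg 0$ (again by filtration and induction on length). Granting that, the equivalence of (ii) and (iii) follows, and combined with (i) $\Rightarrow$ (ii) and (iii) $\Rightarrow$ (i) the cycle closes. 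An alternative, possibly slicker, organization is to prove (i) $\Leftrightarrow$ (iii) first by the Bass-type argument above, then derive (i) $\Rightarrow$ (ii) as before and (ii) $\Rightarrow$ (i) by the same "defect is finite length, so its presence would contradict Gorensteinness" reasoning applied to $\fm^{\otimes n}\cong\fm D$; I would likely present it this way to keep the finite-length bookkeeping in one place.
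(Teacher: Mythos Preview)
Your handling of (i)$\Rightarrow$(ii),(iii) and of (iii)$\Rightarrow$(i) is fine; the Bass-type argument for (iii)$\Rightarrow$(i) via the filtration of $R/\fm^n$ by copies of $k$ is a legitimate and self-contained route. The problem is the implication (ii)$\Rightarrow$(i). You yourself flag the circularity: in the case where the finite-length defect $L=\ker(\fm^{\otimes n}\twoheadrightarrow\fm^n)$ has $\Ext^i_R(L,R)\ne0$ infinitely often, you deduce that $R$ is not Gorenstein and then say you ``must separately check (ii) fails.'' But that check is precisely the contrapositive of (ii)$\Rightarrow$(i), so nothing has been proved. The long exact sequences one can extract from $0\to L\to\fm^{\otimes n}\to\fm^n\to 0$ or from $0\to\fm D\to D\to D/\fm D\to 0$ only yield, for $i\gg0$, identifications such as $\Ext^i_R(L,R)\cong\Ext^{i+2}_R(R/\fm^n,R)$ or $\Ext^i_R(D,R)\cong\Ext^i_R(k,R)^{\oplus r}$; these are consistent with either outcome and do not force $\Ext^i_R(k,R)=0$. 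The ``transfer across finite-length defects'' strategy simply does not close for this direction.

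The paper sidesteps the comparison of $\fm^{\otimes n}$ with $\fm^n$ entirely. It does not pass through any defect module; instead it observes that \emph{both} $\fm^{\otimes n}$ and $\fm^n$ are nonzero modules of the form $\fm C$ (take $C=\fm^{n-1}$ for the ideal power; for the tensor power this is \ref{obs2}), and hence both enjoy the Levin--Vasconcelos rigidity recorded in \ref{obs3} and \ref{obs4}: two consecutive vanishings $\Tor_j^R(\fm C,X)=\Tor_{j+1}^R(\fm C,X)=0$ force $\pd_R X<\infty$. The implications (ii)$\Rightarrow$(i) and (iii)$\Rightarrow$(i) then both follow from the cited result \cite[4.4]{CelWag}, which turns ``$\Ext^i_R(M,R)=0$ for $i\gg0$'' into ``$R$ is Gorenstein'' for modules $M$ with this rigidity property. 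That external input is exactly the missing ingredient in your plan; without it (or an equivalent device producing Gorensteinness directly from rigidity plus eventual $\Ext$-vanishing), the finite-length bookkeeping cannot establish (ii)$\Rightarrow$(i).
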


\section{Proof of the main result and consequences}

We start by recalling some definitions.

\begin{dfn} \label{TTdfn} Let $M$ be an $R$-module. Recall that:
\begin{enumerate}[\rm(i)]
\item (\cite{Au}) $M$ is \emph{Tor-rigid} provided that the following holds: whenever $N$ is an $R$-module with $\Tor_j^R(M, N)=0$ for some $j\geq 1$, one has that $\Tor_{v}^R(M, N)=0$ for all $v\geq j$.
%\item (\cite{}) $M$ is \emph{test} (or $\pd$-test) if the following holds: whenever $N\in \md(R)$ is a module such that $\Tor_j^R(M, N)=0$ for all $j\gg 0$, one has $\pd_R(N)<\infty$.
\item (\cite{DLM}) $M$ is \emph{strongly-rigid} provided that the following holds: whenever $N$ is an $R$-module with $\Tor_{n}^{R}(M,N)=0$ for some $n\geq 1$, one has that $\pd_R(N)<\infty$.
%\item $M$ is said to be \emph{rigid-test} if it is both Tor-rigid and $\pd$-test.
\end{enumerate}
\end{dfn}

Let us point out that, it is not known whether strongly-rigid modules are Tor-rigid; see \cite[Question 2.5]{RM}. In general it is quite subtle to determine whether a given module is strongly-rigid or Tor-rigid, but various characterizations of local rings have already been obtained in terms of such classes of modules. For example, existence of a nonzero Tor-rigid module of finite injective dimension forces the ring to be Gorenstein; see \cite[4.13(i)]{RM}. 

The following, straightforward albeit quite useful, observation is implicit in \cite{LV}.

\begin{chunk}\label{mx} Let $C=(C_i, \partial_i)_{i\in \ZZ}$ be a minimal complex with $C_i$ are $R$-modules, i.e., $\im (\partial_{i+1}) \subseteq \fm \cdot C_i$ for each $i$.
Assume $\sH_n(\fm C) = 0$ for some $n \in \ZZ$. As $\sH_n(\fm C) = \ker(\partial_n) \cap \fm C_n / \fm \cdot \im(\partial_{n+1})$, we have
$ \im(\partial_{n+1}) \subseteq \ker(\partial_n) \cap \fm \cdot C_n = \fm \cdot \im(\partial_{n+1})$.
By Nakayama's lemma, we conclude that $\im(\partial_{n+1}) = 0$, i.e., $\partial_{n+1} = 0$.
\end{chunk}

We can now use \ref{mx} and prove our main result:

\begin{proof}[Proof of Theorem \ref{mainthmintro}] \label{prf} We will only prove the statement about the vanishing of $\Tor$; the one about $\Ext$ follows similarly.

Note that $\depth_R (\fm^j M)\geq 1$ for any $j \ge 0$. Hence it suffices to consider the case where $t=1$ and $n\geq 1$. Assume $\Tor_n^R(\fm M, N) = 0$, and consider the exact sequence $$0 \to \fm M \to M \to M/\fm M \to 0.$$ This yields the exact sequence $0 = \Tor_n^R(\fm M, N) \to \Tor_n^R(M, N) \to \Tor_n^R(M/\fm M, N)$, which shows that $\Tor_n^R(M, N)$ has finite length.

Let $C=M \otimes_R F$, where $F=(F_i,\partial_i^F)_{i\ge0}$ is a minimal free resolution of $N$. It follows that $\im(\partial_{i+1}^{C}) \subseteq \fm \cdot C_i$ for each $i$. Since $0=\Tor_n^R(\fm M, N) = \sH_n(\fm M \otimes_R F)$, we see from \ref{mx} that $\partial_{n+1}^{C} = 1_M \otimes_R \partial_{n+1}^F = 0$. Therefore, we have
$$
\Tor_n^R(M, N) = \ker\left(1_M \otimes_R \partial_n^F \right) / \left( \im (1_M \otimes_R \partial_{n+1}^F)\right) = \ker\left(1_M \otimes_R \partial_n^F \right).
$$
Now suppose $\Tor_n^R(M, N)\neq 0$. Then, since it embeds into a finite direct sum of copies of $M$, we conclude that $1\leq \depth_R\left(\Tor_n^R(M, N)\right)<\infty$. However, $\Tor_n^R(M, N)$ has finite length so that $\depth_R\left(\Tor_n^R(M, N)\right)=0$. This shows that $\Tor_n^R(M, N)$ must vanish, as claimed.
\end{proof}

It is also worth noting that Theorem \ref{mainthmintro} may fail if the module in question has zero depth: we give two such examples over rings of depth one and two, respectively.

\begin{ex} \label{ornek1} Let $k$ be a field, $R=k[\![x,y ]\!] /(xy)$, $M=k \oplus R$, and let $N=R/(x+y)$. Then $\depth_R(M)=0$, $\fm M =\fm$ and $\pd_R(N)=1$. However, $\Tor_1^R(\fm M, N)=0 \neq \Tor_1^R(M, N)$.
\end{ex}

\begin{ex} \label{ornek2} Let $k$ be a field, $R=k[\![x,y ]\!]$, $\fm=(x,y)$, $M=\fm/(x^2, xy)$ and $N=R/(y)$. Then $\fm M= \fm^2/(x^2, xy) \cong R/\left( (x^2, xy):_Ry^2 \right)=R/(x)$. Hence $\Tor_1^R(\fm M, N)=0$.
But $\Tor_1^R(M,N)\neq 0$ since multiplication by $y$ on $M$ is not injective. Note that $\depth_R(M)=0$.
\end{ex}

Next we discuss several corollaries of Theorem \ref{mainthmintro}. First we deduce from Theorem \ref{mainthmintro} the positive depth case of Theorem \ref{LV}, and prove Corollary \ref{sonuc2}.

\begin{proof}[Proof of the positive depth case of Theorem 1.1 by using Theorem 1.2] Assume $M$ has positive depth and $\pd_R(\fm^tM)$ is finite, say $s$. Then we have $\Tor_{s+1}^R(\fm^tM,k)=0$ so that $\Tor_{s+1}^R(\fm^{t-1}M,k)=0$ by Theorem 1.2. Hence, $\pd_R(\fm^{t-1}M)$ is also finite. There is an exact sequence $$0 \to \fm^tM \to \fm^{t-1}M \to k^{\oplus u} \to 0,$$ with $u\ge1$ as $\fm^{t-1}M\ne0$. It follows that $\pd_R(k)<\infty$, and $R$ is regular. The assertion on injective dimension is shown similarly. 
\end{proof}

\begin{proof}[Proof of Corollary \ref{sonuc2}] Assume $\fm^sM\neq 0$ and $\Ext^n_R(\fm^sM,\fm^t )=0$ for some $n, s, t\geq 1$. Note, since $\depth(R)\geq 1$, Corollary \ref{sonuc1} implies that $\fm^t$ is strongly-rigid and Tor-rigid. As $\depth_R(\fm^t)=1$, we conclude from \cite[1.1]{RM} that $\pd_R(\fm^sM)<\infty$. Now Theorem \ref{LV} shows that $R$ is regular.
\end{proof}

\begin{cor} \label{cor1} Let $M$ be an $R$-module such that $\depth_R(M)\geq 1$.
\begin{enumerate}[\rm(i)]
\item If $M$ strongly-rigid, then $\fm^t M$ is strongly-rigid for each $t\geq 1$. 
\item If $M$ is Tor-rigid, then $\fm M$ is strongly-rigid and Tor-rigid.
\end{enumerate}
\end{cor}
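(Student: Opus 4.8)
The plan is to prove both parts by reducing, via the exact sequence
\[
0 \to \fm^t M \to M \to M/\fm^t M \to 0,
\]
to statements about $M$ together with Theorem \ref{mainthmintro}. For part (i), suppose $\Tor_n^R(\fm^t M, N) = 0$ for some $n \ge 1$. By Theorem \ref{mainthmintro} applied $t$ times (or directly, since $\depth_R(\fm^j M) \ge 1$ for all $j$), we get $\Tor_n^R(M, N) = 0$. As $M$ is strongly-rigid, $\pd_R(N) < \infty$, which is exactly the conclusion needed for $\fm^t M$ to be strongly-rigid. So part (i) is essentially immediate from Theorem \ref{mainthmintro}.

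For part (ii), the strongly-rigid assertion follows the same way: if $\Tor_n^R(\fm M, N) = 0$ with $n \ge 1$, then $\Tor_n^R(M, N) = 0$ by Theorem \ref{mainthmintro}; since Tor-rigid modules of positive depth are strongly-rigid by \cite[1.1]{RM} (which requires $\depth_R(M) \ge 1$), we get $\pd_R(N) < \infty$. For the Tor-rigidity of $\fm M$, suppose $\Tor_j^R(\fm M, N) = 0$ for some $j \ge 1$; I must show $\Tor_v^R(\fm M, N) = 0$ for all $v \ge j$. First, Theorem \ref{mainthmintro} gives $\Tor_j^R(M, N) = 0$, and since $M$ is Tor-rigid, $\Tor_v^R(M, N) = 0$ for all $v \ge j$; equivalently $\pd_R(N) \le j - 1$ (using that a vanishing Tor followed by all higher vanishing forces finite projective dimension bounded appropriately — or simply invoke the strongly-rigid conclusion to get $\pd_R(N) < \infty$ and then $\pd_R(N) < j$ since $\Tor_j^R(M,N) = 0$ with $M \ne 0$ having a nonzero map to... ). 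Then $\Tor_v^R(\fm M, N) = 0$ for all $v > \pd_R(N)$ trivially, and for $j \le v \le \pd_R(N)$ there is nothing to fill since in fact $\pd_R(N) < j$; so all the relevant Tor modules vanish.

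The one point requiring a little care is deducing $\pd_R(N) \le j-1$ from $\Tor_j^R(M,N) = 0$ together with $M$ Tor-rigid (equivalently, together with $M$ strongly-rigid via \cite[1.1]{RM}, so $\pd_R N < \infty$). Once $\pd_R(N) =: p < \infty$, if $p \ge j$ then $\Tor_p^R(M,N) \ne 0$ — this is standard since $\Tor_p^R(M,N) \cong \Tor_1^R(\syz^{p-1}N, M)$ surjects onto... more simply, the $p$-th Tor against any nonzero module is nonzero when $\pd_R N = p$; but $M$ Tor-rigid and $\Tor_j^R(M,N)=0$ with $j \le p$ would force $\Tor_p^R(M,N) = 0$, a contradiction. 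Hence $p < j$, and then $\Tor_v^R(\fm M, N) = 0$ automatically for all $v \ge j > p$. I expect this bookkeeping — passing cleanly between ``Tor-rigid gives a tail of vanishing'' and ``finite projective dimension bounded by $j-1$'' — to be the only mild obstacle; the substance is entirely carried by Theorem \ref{mainthmintro} and \cite[1.1]{RM}.

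\begin{proof}[Proof of Corollary \ref{cor1}]
(i) Let $t\geq 1$ and suppose $\Tor_n^R(\fm^t M, N)=0$ for some $n\geq 1$. Since $\depth_R(\fm^j M)\geq 1$ for every $j\geq 0$, repeated application of Theorem \ref{mainthmintro} yields $\Tor_n^R(M, N)=0$. As $M$ is strongly-rigid, $\pd_R(N)<\infty$; hence $\fm^t M$ is strongly-rigid.

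(ii) First, $M$ Tor-rigid with $\depth_R(M)\geq 1$ is strongly-rigid by \cite[1.1]{RM}, so the argument in (i) with $t=1$ shows $\fm M$ is strongly-rigid. For Tor-rigidity, suppose $\Tor_j^R(\fm M, N)=0$ for some $j\geq 1$. By Theorem \ref{mainthmintro}, $\Tor_j^R(M, N)=0$, and since $\fm M$ is strongly-rigid (just shown), $p:=\pd_R(N)<\infty$. If $p\geq j$, then $\Tor_p^R(M,N)\neq 0$, yet $M$ Tor-rigid and $\Tor_j^R(M,N)=0$ force $\Tor_p^R(M,N)=0$, a contradiction; thus $p<j$. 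Consequently $\Tor_v^R(\fm M, N)=0$ for all $v\geq j$, so $\fm M$ is Tor-rigid.
\end{proof}
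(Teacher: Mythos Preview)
Your proof of part (i) is fine and matches the paper's. The problem is in part (ii), where two of your key claims are simply false.

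First, the assertion ``$M$ Tor-rigid with $\depth_R(M)\ge 1$ is strongly-rigid by \cite[1.1]{RM}'' is wrong. Take $M=R$ over any non-regular local ring $R$ with $\depth R\ge 1$: then $M$ is (trivially) Tor-rigid and has positive depth, but $\Tor_1^R(R,k)=0$ while $\pd_R(k)=\infty$, so $M=R$ is \emph{not} strongly-rigid. The result \cite[1.1]{RM} concerns vanishing of $\Ext$ against a Tor-rigid module of positive depth (this is exactly how the paper invokes it in the proof of Corollary~\ref{sonuc2}); it does not say that Tor-rigid modules of positive depth are strongly-rigid. So your derivation that $\fm M$ is strongly-rigid collapses.

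Second, in your Tor-rigidity argument you assert that if $p=\pd_R(N)<\infty$ and $M\neq 0$ then $\Tor_p^R(M,N)\neq 0$. This is also false in general: again $M=R$ gives $\Tor_p^R(R,N)=0$ for every $p\ge 1$. Since both flaws are witnessed by $M=R$, your argument fails already for the most basic instance of the corollary, namely the Tor- and strong-rigidity of $\fm$ itself.

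The paper's route for (ii) avoids these pitfalls and is more elementary: from $\Tor_n^R(\fm M,N)=0$ one gets $\Tor_n^R(M,N)=0$ by Theorem~\ref{mainthmintro}, hence $\Tor_{n+1}^R(M,N)=0$ by Tor-rigidity of $M$; then the long exact sequence coming from $0\to\fm M\to M\to M/\fm M\to 0$ squeezes $\Tor_{n+1}^R(M/\fm M,N)$ between two zeros, giving $\Tor_{n+1}^R(k,N)=0$ (as $M/\fm M\cong k^{\oplus r}$ with $r\ge 1$), whence $\pd_R(N)\le n$. This single computation yields both strong-rigidity and Tor-rigidity of $\fm M$ at once.
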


\begin{proof} Part (i) is an immediate corollary of Theorem \ref{mainthmintro}. So we will prove part (ii).

Let $N$ be an $R$-module with $\Tor_n^R(\fm M, N)=0$ for some $n\geq 1$. Then it follows from Theorem \ref{mainthmintro} that $\Tor_n^R(M, N)=0$. Since $M$ is Tor-rigid,  we have that $\Tor_{n+1}^R(M, N)=0$. Hence, tensoring the exact sequence $0 \to \fm M \to M \to M/\fm M \to 0$ with $N$, we obtain the exact sequence $0=\Tor_{n+1}^R(M, N) \to \Tor_{n+1}^R(M/\fm M, N) \to \Tor_n^R(\fm M, N)=0$.
This shows $\Tor_{n+1}^R(k, N)=0$ so that $\pd_R(N)\leq n$, and $\Tor_i^R(\fm M, N)=0$ for all $i\geq n$.
\end{proof}

Theorem \ref{mainthmintro} allows us to find out new classes of Tor-rigid modules over hypersurfaces:

\begin{cor} Let $R=S/(f)$ be a hypersurface ring, where $(S, \mathfrak{n})$ is an unramified regular local ring and $0\neq f \in \mathfrak{n}$. If $M$ is a finite length $R$-module, then $\fm \Omega^i(M)$ is Tor-rigid for each $i\geq 1$.
\end{cor}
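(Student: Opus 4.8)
The strategy is to combine Theorem \ref{mainthmintro} with the classical fact that, over a hypersurface, finitely generated modules are Tor-rigid after one syzygy. First recall the relevant input: by a theorem of Murthy (following Auslander), if $R=S/(f)$ is a hypersurface with $S$ unramified regular local, then every $R$-module is Tor-rigid; in particular $\syz^i(M)$ is Tor-rigid for each $i\ge 1$ and each $R$-module $M$. (Unramifiedness is exactly what is needed to rule out the pathological examples; this is where the hypothesis on $S$ is used.) So for any finitely generated $R$-module $M$ and any $i\ge 1$, the module $\syz^i(M)$ is Tor-rigid.

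The second ingredient is a depth computation: I need $\depth_R\bigl(\syz^i(M)\bigr)\ge 1$ in order to apply Corollary \ref{cor1}(ii). Since $M$ has finite length, $\depth_R(M)=0$ if $M\ne 0$, but taking syzygies raises depth. Concretely, over the hypersurface $R$ (which has positive depth as $f\ne 0$ and $S$ is a domain of dimension $\ge 1$, hence $\depth R\ge 1$), the depth lemma applied to $0\to\syz^{i}(M)\to R^{b_{i-1}}\to\syz^{i-1}(M)\to 0$ gives $\depth_R\syz^i(M)\ge\min\{\depth R,\ \depth_R\syz^{i-1}(M)+1\}$, and an easy induction starting from $\depth_R\syz^1(M)\ge 1$ (again by the depth lemma on $0\to\syz^1(M)\to R^{b_0}\to M\to 0$, using $\depth R\ge 1$) yields $\depth_R\syz^i(M)\ge\min\{i,\depth R\}\ge 1$ for all $i\ge 1$.

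With these two facts in hand, the conclusion is immediate: fix $i\ge 1$ and set $L=\syz^i(M)$. Then $L$ is Tor-rigid and $\depth_R(L)\ge 1$, so Corollary \ref{cor1}(ii) applies and tells us that $\fm L=\fm\syz^i(M)$ is (strongly-rigid and) Tor-rigid, which is exactly the assertion.

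**Anticipated obstacle.** The only genuinely substantive point is citing the right form of the Tor-rigidity statement for unramified hypersurfaces — i.e., making sure the literature reference covers "every module over $R=S/(f)$, $S$ unramified regular local, is Tor-rigid" rather than merely the one-dimensional or complete-intersection-of-codimension-one special cases one sometimes sees. Everything after that is routine bookkeeping with the depth lemma and a direct appeal to Corollary \ref{cor1}(ii); no new ideas are needed.
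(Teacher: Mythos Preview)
Your overall architecture---show $\syz^i(M)$ is Tor-rigid of positive depth, then invoke Corollary~\ref{cor1}(ii)---matches the paper's. But the justification you give for Tor-rigidity of $\syz^i(M)$ is wrong. Murthy's theorem for a codimension-$c$ complete intersection requires $c+1$ consecutive vanishing Tors, so over a hypersurface it gives rigidity only after \emph{two} consecutive vanishings, not one. In fact not every module over an unramified hypersurface is Tor-rigid: over $R=k[\![x,y]\!]/(xy)$ one has $\Tor_2^R(R/(x),R/(x))=0$ while $\Tor_1^R(R/(x),R/(x))\cong k\neq 0$ and $\Tor_3^R(R/(x),R/(x))\cong k\neq 0$. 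The paper instead cites \cite[2.4]{HW1}: over such a hypersurface every \emph{finite length} module is Tor-rigid. Since $M$ has finite length, $M$ is Tor-rigid, and Tor-rigidity passes to syzygies by dimension shifting; this is where the finite-length hypothesis on $M$ actually enters.

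There is a second, smaller gap: your claim that $\depth R\ge 1$ fails when $\dim S=1$ (e.g.\ $S=k[\![t]\!]$, $f=t^2$), so your depth-lemma argument does not start. The paper disposes of this case first: if $\depth R=0$ then $R$ is Artinian, $\fm\,\syz^i(M)$ itself has finite length, and \cite[2.4]{HW1} gives Tor-rigidity directly. Only after reducing to $\depth R\ge 1$ does the paper (and can you) argue that $\syz^i(M)$, being a syzygy, has positive depth and apply Corollary~\ref{cor1}(ii).
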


\begin{proof} Note that each finite length module is Tor-rigid \cite[2.4]{HW1}. Hence we may assume $R$ has positive depth. Then, given $i\geq 1$, since $\Omega^i(M)$ is a Tor-rigid module that has positive depth, we conclude by Corollary \ref{cor1}(ii) that $\fm \Omega^i(M)$ is Tor-rigid, as claimed.
\end{proof}

If $R$ has positive depth and $N$ is an $R$-module, it is known, and easy to see, that $N$ is free if and only if  $\fm  \otimes_R N$ is torsion-free; see, for example, \cite[page 842]{Const}. Thanks to Theorem \ref{mainthmintro}, we can extend this result under mild conditions: 

\begin{cor} \label{CMmaincor2} Assume $\depth(R)\geq 1$, and let $N$ be an $R$-module. Assume $N_{\fp}$ is torsionless for each associated prime ideal $\fp$ of $R$ (e.g., $R$ is reduced). Then $N$ is free if and only if $\fm ^t \otimes_R N$ is torsion-free for some $t\geq 1$. 
\end{cor}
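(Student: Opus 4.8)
The plan is to prove the nontrivial direction: assuming $\fm^t \otimes_R N$ is torsion-free for some $t \ge 1$, deduce that $N$ is free. The natural tool is the short exact sequence $0 \to \fm^t \to R \to R/\fm^t \to 0$, which upon tensoring with $N$ gives an exact sequence
\[
\Tor_1^R(R/\fm^t, N) \to \fm^t \otimes_R N \to N \to N/\fm^t N \to 0.
\]
The image of the leftmost map is precisely the set of elements of $\fm^t \otimes_R N$ killed by the natural map to $N$. First I would argue this image is a torsion module: localizing at any prime $\fp$ not containing $\fm^t$ (equivalently, not equal to $\fm$, when $\dim R \ge 1$), the sequence $0 \to \fm^t_{\fp} \to R_{\fp} \to 0$ shows the relevant $\Tor$ vanishes, so the image is supported only at $\fm$ and hence has finite length. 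Since $\fm^t \otimes_R N$ is torsion-free and $\depth(R) \ge 1$ forces torsion-free modules to have positive depth (every nonzero torsion-free module has a nonzerobdivisor in the annihilator-free sense — more precisely its support contains a prime of positive dimension, or one invokes that $\fm \notin \mathsf{Ass}(\fm^t \otimes_R N)$), any finite-length submodule of $\fm^t \otimes_R N$ vanishes. Therefore the map $\fm^t \otimes_R N \to N$ is injective, i.e., $\Tor_1^R(R/\fm^t, N) \to \fm^t \otimes_R N$ is zero.

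Next, I would push this to show $\Tor_1^R(R/\fm^t, N) = 0$ outright. Consider also the sequence $0 \to \fm^{t} \to \fm^{t-1} \to k^{\oplus u} \to 0$ (with $u \ge 1$ when $R$ is not Artinian; if $R$ is Artinian then $\depth(R) \ge 1$ is impossible, so we may assume $\dim R \ge 1$). Tensoring with $N$ and chasing, vanishing of $\Tor_1^R(\fm^t, N)$ would propagate downward to $\Tor_1^R(\fm, N) = 0$, and then the sequence $0 \to \fm \to R \to k \to 0$ gives $\Tor_2^R(k, N) \cong \Tor_1^R(\fm, N) = 0$; but here I must be careful, since what the torsion-free hypothesis directly yields is injectivity of $\fm^t \otimes_R N \to N$, not vanishing of a higher $\Tor$. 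The cleaner route: injectivity of $\fm^t \otimes_R N \to N$ says $\Tor_1^R(R/\fm^t, N) = 0$ (as the connecting map is zero and the preceding term $\Tor_1^R(R,N)=0$). So in fact $\Tor_1^R(R/\fm^t, N) = 0$. Now apply Theorem \ref{mainthmintro} in the form suited to this situation — or rather, use the exact sequence $0 \to \fm^t \to R \to R/\fm^t \to 0$ once more: $\Tor_1^R(R/\fm^t,N)=0$ combined with $\Tor_1^R(R,N)=0$ gives $\fm^t \otimes_R N \hookrightarrow N$, and I want to conclude $\Tor_n^R(\fm^t, N)$ behaves well.

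Here is where Corollary \ref{sonuc1} enters, which is the real engine. From $\Tor_1^R(R/\fm^t, N) = 0$ and the long exact sequence, $\Tor_i^R(\fm^t, N) \cong \Tor_{i+1}^R(R/\fm^t, N)$ for $i \ge 1$; in particular if I can show $\Tor_n^R(\fm^t, N) = 0$ for some $n \ge 1$, then by Corollary \ref{sonuc1} we get $\pd_R(N) < \infty$, and then the associated-prime hypothesis on $N$ plus finiteness of projective dimension forces $N$ free (a module of finite projective dimension which is torsionless in codimension zero, or even: $\pd_R N < \infty$ and $N$ torsion-free at associated primes, combined with Auslander–Buchsbaum, gives $N$ free after a short argument — if $\pd_R N = p \ge 1$, localize at a prime where depth drops and derive a contradiction with torsionlessness, or use that $\Omega^{p-1} N$ has a free summand). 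So the crux reduces to extracting the vanishing of some single $\Tor_n^R(\fm^t, N)$. The torsion-freeness of $\fm^t \otimes_R N$ gives us $\Tor_1^R(R/\fm^t, N) = 0$, hence $\Tor_1^R(\fm^t, N) = \Tor_2^R(R/\fm^t, N)$, which need not vanish a priori — so I should instead observe that torsion-freeness of $\fm^t \otimes_R N$ together with $\depth R \ge 1$ gives, via the displayed four-term sequence, that the finite-length module $\Tor_1^R(R/\fm^t,N)$ embeds in a torsion-free module and thus is zero; that is the vanishing I use, and then I apply Corollary \ref{sonuc1} with $n = 1$ after rewriting $\Tor_1^R(R/\fm^t, N) = 0$ as a statement giving $\pd_R(N) \le 0$ via the sequence $0 \to \fm^t \to R \to R/\fm^t \to 0$: indeed $\Tor_1^R(R/\fm^t,N) = 0$ means $\Tor_0$-level exactness, and combined with Corollary \ref{sonuc1} applied to $\fm^t$ (using that $\Tor_1^R(\fm^t, N)$'s vanishing would force $\pd_R N \le 1$) — I expect the main obstacle to be this bookkeeping: correctly identifying \emph{which} $\Tor$ vanishes from torsion-freeness and feeding it into Corollary \ref{sonuc1} to get finite projective dimension, then upgrading finite projective dimension to freeness using the hypothesis on associated primes. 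The latter upgrade is standard (finite projective dimension plus torsionless localization at $\mathsf{Ass}(R)$ kills the first syzygy obstruction) but must be written with care when $\depth R = 1$.
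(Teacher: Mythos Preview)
Your argument correctly establishes $\Tor_1^R(R/\fm^t, N) = 0$: the connecting map $\Tor_1^R(R/\fm^t, N) \to \fm^t \otimes_R N$ is injective (as $\Tor_1^R(R, N) = 0$), its source has finite length, and a torsion-free module over a ring of positive depth has no nonzero finite-length submodules. From this point, however, there is a genuine gap. Corollary~\ref{sonuc1} requires $\Tor_n^R(\fm^t, -) = 0$ for some $n \ge 1$; what you have is only the injectivity of $\fm^t \otimes_R N \to N$, and the dimension shift $\Tor_i^R(\fm^t, N) \cong \Tor_{i+1}^R(R/\fm^t, N)$ holds only for $i \ge 1$, so the vanishing you obtained does not feed into Corollary~\ref{sonuc1}. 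You recognize this (``the crux reduces to extracting the vanishing of some single $\Tor_n^R(\fm^t, N)$''), but the subsequent discussion does not resolve it. Moreover, the final ``upgrade'' you propose --- finite projective dimension together with torsionlessness at associated primes forces freeness --- is false: take $R = k[\![x]\!]$ and $N = k$, where $\pd_R N = 1$ and $N_{(0)} = 0$ is torsionless, yet $N$ is not free. Notably, your argument never substantively uses the torsionless hypothesis on $N_\fp$.

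The missing idea is precisely where that hypothesis enters in the paper's proof. Pass first to the torsion-free quotient $X$ of $N$; since $X_\fp \cong N_\fp$ is torsionless for every $\fp \in \mathsf{Ass}(R)$, one has $\Ext^1_R(\Tr X, R)=0$, and hence an embedding $0 \to X \to F \to C \to 0$ with $F$ free. Your computation, applied to $X$ (using $\fm^t \otimes_R X \cong \fm^t \otimes_R N$, since the torsion part of $N$ contributes only torsion to the tensor product), yields $\Tor_1^R(R/\fm^t, X) = 0$. Pushing this through the sequence gives $\Tor_2^R(R/\fm^t, C) \cong \Tor_1^R(\fm^t, C) = 0$, which \emph{is} of the right shape for Corollary~\ref{sonuc1}; hence $\pd_R C \le 1$, so $X$ is free, and then $N$ is free. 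The introduction of $C$ is exactly what shifts your degree-one vanishing for $R/\fm^t$ into a degree-one vanishing for $\fm^t$.
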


\begin{proof} Let $X$ be the torsion-free part of $N$. Then $X_{\fp} \cong N_{\fp}$ for each associated prime $\fp$ of $R$. So $\Ext^{1}_{R}(\Tr X, R)=0$, and hence there is an exact sequence $0 \to X \to F \to C \to 0$, where $F$ is a free module; see, for example, \cite[Prop. 5]{Masek}.

Tensoring $X$ with the short exact sequence $0 \to \fm^t \to R \to R/\fm^t \to 0$, we conclude that there is an injection $\Tor_1^R(X, R/\fm^t) \hookrightarrow \fm^t \otimes_R X \cong \fm^t \otimes_R N$; see \cite[1.1]{HW1}. This implies that $\Tor_1^R(X, R/\fm^t) =0$. Therefore we have $0=\Tor_2^R(C, R/\fm^t) \cong \Tor_1^R(C, \fm^t) $, and hence $\pd_R(C)\leq 1$; see Corollary \ref{sonuc1}. Thus, $X$ is free and this implies $N$ is free; see \cite[1.1]{HW1}. 
\end{proof}

%\section{Further remarks}

We finish this section by showing that modules of the form $\fm M \otimes_RN$ is strongly-rigid and Tor-rigid. This will allow us to establish Proposition \ref{prop1} advertised in the introduction.

\begin{chunk} \label{obs2} Let $M$ and $N$ be $R$-modules such that $\fm M \neq 0 \neq N$. Then consider the minimal free presentations of $M$ and $N$, respectively: $R^{\oplus a} \twoheadrightarrow M$ and $R^{\oplus b} \twoheadrightarrow N$.

It follows that we have the surjection: $\fm^{\oplus a} = \fm R^{\oplus a} \twoheadrightarrow \fm M$. Tensoring this surjection with $N$, we obtain another surjection: $\fm^{\oplus a} \otimes_R N\twoheadrightarrow \fm M\otimes_R N$. Consequently, we have the following isomorphisms and surjective maps:
$$\fm R^{\oplus ab} = \fm^{\oplus ab} \cong \fm^{\oplus a} \otimes_R R^{\oplus b}\twoheadrightarrow \fm^{\oplus a} \otimes_R N \twoheadrightarrow \fm M\otimes_R N.$$
Therefore, there is an $R$-submodule $C$ of $\fm R^{\oplus ab}$ such that 
$$\fm M \otimes_R N \cong \frac{\fm R^{\oplus ab}}{C} = \fm \left(\frac{R^{\oplus ab}}{C}\right).$$
\end{chunk} 

The rigidity property (mentioned preceding Proposition \ref{prop1}) of nonzero modules of the form $\fm M$, in view of \ref{obs2}, yields:

\begin{chunk} \label{obs3} If $\Tor_n^R(\fm M\otimes_R N, X)=\Tor_{n+1}^R(\fm M\otimes_RN, X)=0$ for some $R$-modules $M$, $N$, $X$ and $n\geq 0$ such that $\fm M \neq 0 \neq N$, then $\pd_R(X)\leq n$, and $\Tor_i^R(\fm M\otimes_RN, X)=0$ for all $i\geq n$.
\end{chunk}

The observations in \ref{obs2} and \ref{obs3}, in particular, show that tensor powers of the maximal ideal have rigidity:

\begin{chunk}  \label{obs4} Assume $R$ is not Artinian, $t\geq 1$ and $\fm^{\otimes0}=R$. Then, letting $M=R$ and $N=\fm^{\otimes (t-1)}$ in \ref{obs3}, we conclude that, if $\Tor_n^R(\fm ^{\otimes t}, X)=\Tor_{n+1}^R(\fm ^{\otimes t}, X)=0$ for some $R$-module $X$ and some $n\geq 0$, then $\pd_R(X)\leq n$, and $\Tor_i^R(\fm ^{\otimes t}, X)=0$ for all $i\geq n$.
\end{chunk}

We can now note that Proposition \ref{prop1} is a consequence of \ref{obs4} and \cite[4.4]{CelWag}. We finish this section by recording a special case of Proposition \ref{prop1}:

\begin{prop} \label{prop2} $R$ is Gorenstein if and only if $\Gdim_R(\fm \otimes_R \fm)<\infty$.
\end{prop}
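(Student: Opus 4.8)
My plan is to prove the forward implication in one line and reduce the converse to the two rigidity observations \ref{obs2}, \ref{obs3} together with the Gorenstein criterion \cite[4.4]{CelWag} that already powers Proposition~\ref{prop1}. For the forward implication: if $R$ is Gorenstein then every finitely generated $R$-module has finite Gorenstein dimension, so in particular $\Gdim_R(\fm\otimes_R\fm)<\infty$; nothing more is needed there.

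For the converse, assume $\Gdim_R(\fm\otimes_R\fm)<\infty$. If $\fm=0$ then $R$ is a field, hence Gorenstein, and we are done; so assume $\fm\ne0$. Set $L=\fm\otimes_R\fm$ and observe that $L$ is precisely a module of the type treated in \ref{obs2} and \ref{obs3}: taking $M=R$ and $N=\fm$ (so that $\fm M=\fm\ne0$ and $N=\fm\ne0$) gives $L=\fm R\otimes_R\fm=\fm\otimes_R\fm$. Thus \ref{obs2} writes $L\cong\fm C$ for some $R$-module $C$, and \ref{obs3} tells us that $L$ has the rigidity property required by \cite[4.4]{CelWag}: whenever $\Tor_n^R(L,X)=\Tor_{n+1}^R(L,X)=0$ for an $R$-module $X$ and some $n\ge0$, one has $\pd_R(X)<\infty$.

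It remains to supply the second hypothesis of \cite[4.4]{CelWag}, namely eventual vanishing of $\Ext^i_R(L,-)$ against $R$; this is exactly where the assumption $\Gdim_R(L)<\infty$ enters, since if $g=\Gdim_R(L)$ then $\Ext^i_R(L,R)=0$ for all $i>g$, hence for all $i\gg0$. Applying \cite[4.4]{CelWag} to $L$ yields that $R$ is Gorenstein, completing the proof. (When $R$ is not Artinian one can shortcut the last two steps: since $\fm\otimes_R\fm=\fm^{\otimes 2}$, the displayed vanishing $\Ext^i_R(\fm^{\otimes 2},R)=0$ for $i\gg0$ is condition (ii) of Proposition~\ref{prop1} with $n=2$, so Proposition~\ref{prop1} gives Gorensteinness directly; this is the sense in which Proposition~\ref{prop2} is a special case of Proposition~\ref{prop1}.)

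I expect the only genuinely substantive point to be the invocation of \cite[4.4]{CelWag} — everything else (rewriting $\fm\otimes_R\fm$ as $\fm C$, reading off its rigidity, and trading ``finite $\Gdim$'' for ``$\Ext$ eventually vanishes'') is bookkeeping. The place to be careful is the degenerate case $\fm=0$, which must be disposed of first, and, if one insists on routing the argument through Proposition~\ref{prop1} rather than through \cite[4.4]{CelWag}, the separate treatment of Artinian $R$: there $\fm\otimes_R\fm$ is a nonzero module of depth zero, so the Auslander--Bridger formula forces $\Gdim_R(\fm\otimes_R\fm)=0$, i.e.\ $\fm\otimes_R\fm$ is totally reflexive, and ruling this out over a non-Gorenstein Artinian ring still requires the rigidity input rather than any elementary length count.
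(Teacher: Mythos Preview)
Your argument is correct and is essentially the paper's approach made explicit: the paper offers no standalone proof of Proposition~\ref{prop2} but simply declares it a special case of Proposition~\ref{prop1}, whose proof in turn rests on \ref{obs4} and \cite[4.4]{CelWag}; you have unpacked precisely that chain (via \ref{obs2}, \ref{obs3}, and \cite[4.4]{CelWag}) and, commendably, dealt with the Artinian and $\fm=0$ cases that Proposition~\ref{prop1}'s hypothesis leaves aside.
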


\end{document}